\documentclass[12pt]{amsart}
\usepackage[left=3cm, right=3cm, top=2cm]{geometry}
\textwidth=6in
\textheight=8.3in
\usepackage{amsmath, amsthm, amssymb, mathtools}
\usepackage{hyperref}
\usepackage{fullpage}
\usepackage{cleveref}
\usepackage{tkz-berge}
\usetikzlibrary{intersections}
\tikzset{help lines/.style={dashed, thick}}
\usepackage[T1]{fontenc}
\usepackage[ansinew]{inputenc}
\usepackage{float}
\usepackage{setspace}
\usepackage{xpatch}
\usepackage{subcaption}
\usepackage{pst-node}
\usepackage{tikz-cd}
\usetikzlibrary{positioning}
\usetikzlibrary{shapes,arrows}
\usepackage{tikz}
\usepackage{graphicx,epsfig}
\usepackage{mathrsfs}
\usepackage{verbatim}

\newtheorem{theorem}{Theorem}[section]
\newtheorem{lemma}[theorem]{Lemma}
\newtheorem{corollary}[theorem]{Corollary}
\newtheorem{definition}{Definition}
\newtheorem{conjecture}{Conjecture}

\newtheorem{proposition}[theorem]{Proposition}

\def\S{\mathbb{S}}

\newcommand{\si}{\sigma}

\numberwithin{equation}{section}


\title{Bounded degree complexes of forests}
\author[Anurag\, Singh]{Anurag Singh}
\address{Department of Mathematics, Chennai Mathematical Institute, India 603103}
\email{asinghiitg@gmail.com}
\date{\today}


\begin{document}
\keywords{bounded degree complexes, caterpillar graphs, matching complexes.}
\subjclass[2010]{05C05, 05E45, 55P10, 55U10}

\begin{abstract}
Given an arbitrary sequence of non-negative integers $\vec{\lambda}=(\lambda_1,\dots,\lambda_n)$ and a graph $G$ with vertex set $\{v_1,\dots,v_n\}$, the bounded degree complex, denoted $\text{BD}^{\vec{\lambda}}(G)$, is a simplicial complex whose faces are the subsets $H\subseteq E(G)$ such that for each $i \in \{1,\dots,n\}$, the degree of vertex $v_i$ in the induced subgraph $G[H]$ is at most $\lambda_i$. When $\lambda_i=k$ for all $i$, the bounded degree complex $\text{BD}^{\vec{\lambda}}(G)$ is called the $k$-matching complex, denoted $M_k(G)$.

In this article, we determine the homotopy type of bounded degree complexes of forests. In particular, we show that, for all $k\geq 1$, the $k$-matching complexes of caterpillar graphs are either contractible or homotopy equivalent to a wedge of spheres, thereby proving a conjecture of Julianne Vega \cite[Conjecture 7.3]{Vega19}. We also give a closed form formula for the homotopy type of the bounded degree complexes of those caterpillar graphs in which every non-leaf vertex is adjacent to at least one leaf vertex.

\end{abstract}

\maketitle


\section{{Introduction}}
Let $G$ be a simple graph without any isolated vertex and $V(G)=\{v_1,\dots,v_n\}$ be the vertex set of $G$. Let $\vec{\lambda}=(\lambda_1,\dots,\lambda_n)$ be a sequence of non-negative integers. The {\itshape bounded degree complex}, denoted $\text{BD}^{\vec{\lambda}}(G)$, is a simplicial complex whose vertices are the edges of $G$ and faces are subsets $H\subseteq E(G)$ such that for each $i \in \{1,\dots,n\}$, the degree of vertex $v_i$ in the induced subgraph $G[H]$ is at most $\lambda_i$ (see \Cref{fig:example of BD} for example). When $\lambda_i =k$ for all $i \in \{1,\dots,n\}$, the bounded degree complex $\text{BD}^{(k,\dots,k)}(G)$ is called the {\itshape $k$-matching complex} of graph $G$ and denoted by $M_k(G)$.

The matching complexes were first introduced by Bouc \cite{Bou92} in connection with Brown complexes and Quillen complexes. His main aim was to understand the homology groups of $M_1(K_n)$ as $S_n$ representations, where $K_n$ is the complete graph on $n$ vertices and $S_n$ is the symmetric group on $n$ element set. He gave a combinatorial formula for the Euler characteristic of $M_1(K_n)$. Prior to Bouc's work on $M_1(K_n)$, Garst \cite{Gar79}, dealing with
Tits coset complexes, introduced the chessboard complex, now known as $1$-matching complex of complete bipartite graphs $K_{m,n}$. He showed that $M_1(K_{m,n})$ is Cohen-Macaulay if and only if $2m-1 \leq n$. For detailed survey of work done on these complexes, we refer the reader to Section $2$ of Wach's survey \cite{Wach03}.

The study of matching complexes have become very popular in the topological combinatorics community because of their applications and the fact that their topological properties are interesting in their own right. For example, for trees \cite{MT08}, paths and cycle graphs \cite{Koz07}, $M_1(G)$ has the homotopy type of wedge of spheres, and in some cases, like $K_{m,n}$ for $2m-1\leq n$, $M_1(G)$ is shellable \cite{Zieg94}. However, this is not the case in general, for example Shareshian and Wachs \cite{SW07} showed that the integral homology groups of matching complexes are not torsion-free in many cases. Therefore it is extremely difficult to determine their homotopy type in general.

 In \cite{MT08}, Marietti and Testa studied the homotopy type of $1$-matching complexes of forests and proved the following.

\begin{theorem}\cite[Theorem 4.13]{MT08}
Let $G$ be a forest. Then $M_1(G)$ is either contractible or homotopy equivalent to a wedge of spheres.
\end{theorem}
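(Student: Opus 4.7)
My plan is to prove the theorem by strong induction on $|E(G)|$. The base case $|E(G)|\le 1$ is trivial since $M_1(G)$ is a single point. If $G=G_1\sqcup G_2$ is disconnected, the simplicial join identification $M_1(G)\cong M_1(G_1)\ast M_1(G_2)$, combined with $S^m\ast S^n\simeq S^{m+n+1}$ and the fact that a join with any contractible space is contractible, finishes this case via the inductive hypothesis. Hence I may assume $G$ is a tree with at least two edges.

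The technical engine is the standard cofiber decomposition: for any vertex $\alpha$ of a simplicial complex $K$, the closed star $\overline{\mathrm{st}}_K(\alpha)$ is a cone on $\mathrm{link}_K(\alpha)$, so $K$ is the mapping cone of the inclusion $\mathrm{link}_K(\alpha)\hookrightarrow \mathrm{del}_K(\alpha)$. To choose the vertex of $M_1(G)$ (that is, the graph-edge) sensibly, I would first prove the structural lemma that every tree with at least two edges contains either (i) a leaf $u$ whose neighbor $v$ has degree exactly $2$, or (ii) a vertex $v$ with two distinct leaf neighbors $u_1,u_2$. This follows from a handshake count: if neither holds, then every leaf-neighbor has degree $\ge 3$ with a unique leaf neighbor, so $|N|=|L|$ where $L,N$ are the sets of leaves and leaf-neighbors; then $\sum_v\deg v \ge |L|+3|N|+2|M|=2n$ with $M=V\setminus(L\cup N)$, contradicting $\sum_v\deg v = 2(n-1)$.

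In case (i), let $w$ be the other neighbor of $v$ and take the graph-edge $e=\{v,w\}$. Then $G-e$ has $\{u,v\}$ as an isolated-edge component, so $M_1(G-e)\cong M_1(\{u,v\})\ast M_1(G\setminus\{u,v\})$ is a cone (since $M_1(\{u,v\})$ is a single point) and hence contractible; the cofiber sequence then forces $M_1(G)\simeq \Sigma\, M_1(G\setminus\{v,w\})$, and induction on the smaller forest $G\setminus\{v,w\}$ completes this case. In case (ii), set $e_i=\{v,u_i\}$ for $i=1,2$. The vertices $e_1,e_2$ of $M_1(G)$ share the same link $L=M_1(G\setminus\{v\})$. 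Inside $M_1(G)\setminus\{e_1\}\cong M_1(G\setminus\{u_1\})$, the subcomplex $L$ sits in the closed star of $e_2$, which is a cone on $L$; hence $L\hookrightarrow M_1(G\setminus\{u_1\})$ is null-homotopic and the cofiber splits to give $M_1(G)\simeq M_1(G\setminus\{u_1\})\vee \Sigma L$. Induction applied to the smaller forests $G\setminus\{u_1\}$ and $G\setminus\{v\}$ finishes the case.

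The main obstacle in this plan is the structural lemma: without it, one is tempted to get stuck on trees in which every leaf has a degree-$\ge 3$ neighbor with only that single leaf, and it is not a priori obvious that such configurations cannot persist. The handshake argument above disposes of this cleanly, and once case (i) or case (ii) is guaranteed at every inductive step, the cofiber decomposition together with the null-homotopy observation in (ii) keeps the inductive step inside the class of contractible-or-wedge-of-spheres complexes, with sphere dimensions shifting appropriately under the $\Sigma$ operations.
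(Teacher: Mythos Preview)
Your proof is correct and uses essentially the same engine as the paper's proof of the more general Theorem~1.2 (via Lemma~3.1): pick an edge $e=\{v,w\}$ where $v$ has a leaf neighbour $\ell$, so that the link of $e$ sits inside the cone with apex $\{\ell,v\}$ contained in the deletion, giving the wedge splitting $M_1(G)\simeq (M_1(G),e)\vee\Sigma(M_1(G):e)$ --- this is exactly your case~(ii) argument, which the paper packages as the ``grape'' condition. Your structural lemma and case split are therefore unnecessary: in any tree with at least two edges every leaf already has a neighbour of degree $\ge 2$, so one can always run the case~(ii) logic with $\ell$ any leaf (not necessarily a second leaf at $v$) and $w$ any other neighbour of $v$; your case~(i) is just the special instance where the deletion happens to be contractible. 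One genuine organizational difference: you absorb disconnectedness into the induction via the join identity, whereas the paper handles it afterwards by a separate argument (Lemma~3.2) that forests admit at most one nontrivial component in $\mathrm{BD}^{\vec{\lambda}}$.
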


Milutinovi{\'c} et al. \cite{MJMV19} determined the closed form formula for the homotopy type of $1$-matching complexes for a special class of trees, known as caterpillar graphs (see \Cref{def:caterpillar graph}). In \cite{Vega19}, Vega studied the homotopy type of $2$-matching complexes of perfect caterpillar graphs (see \Cref{def:perfect caterpillar graphs}) and conjectured the following.
\begin{conjecture}[{\cite[Conjecture 7.3]{Vega19}}]\label{conj:vegas conjecture}
The $k$-matching complex of caterpillar graphs are either contractible or homotopy equivalent to a wedge of spheres.
\end{conjecture}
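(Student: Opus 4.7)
My plan is to strengthen the conjecture by working with arbitrary bounded degree complexes rather than just $k$-matching complexes, and prove by induction on $|E(G)|$ that for every caterpillar $G$ and every sequence $\vec{\lambda}$ of non-negative integers, the complex $\mathrm{BD}^{\vec{\lambda}}(G)$ is either contractible or homotopy equivalent to a wedge of spheres; specialising to $\lambda_i=k$ for all $i$ then yields Vega's conjecture. This generalisation is essentially forced, because the natural inductive move (taking the link of an edge) produces bounded degree complexes with unequal entries at the endpoint of that edge, so an induction restricted to $k$-matching complexes simply does not close up.

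For the inductive step, I pick a leaf $\ell$ of $G$ with neighbour $u$ and let $e=u\ell$ denote the incident edge. A direct inspection of which edge sets are faces gives
$$
\mathrm{del}_{e}\bigl(\mathrm{BD}^{\vec{\lambda}}(G)\bigr) = \mathrm{BD}^{\vec{\lambda}'}(G-\ell) \quad \text{and} \quad \mathrm{lk}_{e}\bigl(\mathrm{BD}^{\vec{\lambda}}(G)\bigr) = \mathrm{BD}^{\vec{\mu}}(G-\ell),
$$
where $\vec{\lambda}'$ is the restriction of $\vec{\lambda}$ to $V(G-\ell)$ and $\vec{\mu}$ is obtained from $\vec{\lambda}'$ by decreasing its entry at $u$ by one. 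Both complexes live on the smaller caterpillar $G-\ell$, so by the inductive hypothesis each is contractible or a wedge of spheres. Since the star of $e$ is a cone on $\mathrm{lk}_e$, hence contractible, we obtain
$$
\mathrm{BD}^{\vec{\lambda}}(G) \;\simeq\; \mathrm{del}_{e} \cup_{\mathrm{lk}_{e}} \mathrm{Cone}(\mathrm{lk}_{e}),
$$
which is the mapping cone of the inclusion $\iota\colon \mathrm{lk}_{e}\hookrightarrow \mathrm{del}_{e}$.

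If either $\mathrm{del}_{e}$ or $\mathrm{lk}_{e}$ is contractible, we are immediately done: the mapping cone is homotopy equivalent to $\mathrm{del}_{e}$ in the first case and to $\Sigma\,\mathrm{lk}_{e}$ in the second, both of which are wedges of spheres. Otherwise, the plan is to show that $\iota$ is null-homotopic, in which case the mapping cone splits as $\mathrm{del}_{e}\vee \Sigma\,\mathrm{lk}_{e}$, again a wedge of spheres.

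The main obstacle is this last null-homotopy step, and this is where the caterpillar hypothesis must enter in an essential way. I plan to exploit two specific combinatorial features. First, the $S_{s}$-symmetry between the $s$ leaves attached to a common spine vertex should yield a simplicial retraction of $\mathrm{del}_{e}$ onto $\mathrm{lk}_{e}$, trivialising $\iota$ in many sub-cases. Second, by choosing $\ell$ adjacent to an extremal spine vertex and iterating the construction until all leaves at that vertex are exhausted, I can reduce to a strictly shorter spine, in parallel with the Marietti--Testa argument in the $k=1$ case. The delicate part will be the bookkeeping for how the modified bound $\vec{\mu}$ at $u$ propagates through successive deletions, and resolving the terminal step in which the spine has been trimmed to a single vertex: here one must verify by hand that the resulting bounded degree complex of a star with a prescribed degree bound at the centre is a wedge of spheres, which should be accessible by a direct nerve or shelling argument.
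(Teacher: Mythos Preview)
Your strategic choices---generalising to arbitrary $\vec{\lambda}$, inducting on $|E(G)|$, and splitting into link and deletion---are exactly right and match the paper. The gap is in \emph{which} edge you delete. You remove the leaf edge $e=u\ell$; the paper instead removes an edge $\{v,w\}$ that is \emph{adjacent} to a leaf $\ell$ (so $\{\ell,v\}\in E(G)$) but is not itself a leaf edge. With the paper's choice the leaf edge $\{\ell,v\}$ is an explicit cone apex: every $\tau$ in the link has degree at most $\lambda_v-1$ at $v$, so $\tau\cup\{\{\ell,v\}\}$ lies in the deletion. This verifies the grape condition in one line, uniformly for all forests, and makes the null-homotopy you are hoping for automatic.

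With your choice the null-homotopy claim is false in general, and the proposed splitting gives the wrong answer. Take the caterpillar with spine $u\text{--}w$, one leaf $\ell$ at $u$, two leaves $a,b$ at $w$, and $\lambda_u=\lambda_w=1$. Deleting $e=u\ell$ gives $\mathrm{del}_e=\{\emptyset,\{uw\},\{wa\},\{wb\}\}$ (three isolated vertices) and $\mathrm{lk}_e=\{\emptyset,\{wa\},\{wb\}\}$ (two isolated vertices); the inclusion lands in two distinct components of $\mathrm{del}_e$, hence is not null-homotopic, and $\mathrm{del}_e\vee\Sigma\,\mathrm{lk}_e\simeq S^0\vee S^0\vee S^1$, whereas the actual complex is $S^0$. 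Your $S_s$-symmetry idea, incidentally, does not give a retraction of $\mathrm{del}_e$ onto $\mathrm{lk}_e$ (they need not even be homotopy equivalent), nor would a retraction make $\iota$ null-homotopic; what the symmetry \emph{does} give, when $u$ has a second leaf $\ell'$, is that $u\ell'$ is a cone apex for $\mathrm{lk}_e$ inside $\mathrm{del}_e$---but that is exactly the paper's mechanism in a special case. The clean fix is simply to always delete a non-leaf edge adjacent to a leaf and use the leaf edge as the apex; then no separate ``last leaf'' step, no case analysis on the extremal spine vertex, and no terminal star computation are needed.
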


Now, we turn our attention to bounded degree complexes, which is a natural generalization of matching complexes. The study of bounded degree complexes was initiated in a paper of Reiner and Roberts \cite{RR00}. Jonsson \cite{Jon08} further studied these complexes and derived connectivity bounds for BD$^{\vec{\lambda}}(K_n)$.  For more on these complexes, interested reader is referred to \cite{Jon08, Wach03}. 

In this article, we determine the homotopy type of bounded degree complexes of forests and prove \Cref{conj:vegas conjecture} as a special case. The main result of this article is the following.

\begin{theorem}\label{theorem:BD of forests main result}
Let $F$ be a forest on $n$ vertices and $\vec{\lambda}=(\lambda_1,\dots,\lambda_n)$ be any sequence of non-negative integers. Then, $\mathrm{BD}^{\vec{\lambda}}(F)$ is either contractible or homotopy equivalent to a wedge of spheres.
\end{theorem}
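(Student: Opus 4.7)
The plan is to induct on the number of edges of $F$. If $F$ has no edges then $\mathrm{BD}^{\vec{\lambda}}(F) = \{\emptyset\}$, which is trivially of the desired form. Otherwise, pick a leaf $v$ of $F$ with unique neighbor $u$ and let $e = uv$; the inductive step proceeds by a case analysis on $\lambda_v$, $\lambda_u$, and the local structure at $u$.

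First I would handle the degenerate cases. If $\lambda_v = 0$, then $e$ lies in no face and $\mathrm{BD}^{\vec{\lambda}}(F) = \mathrm{BD}^{\vec{\lambda}}(F-e)$, where $F-e$ has one fewer edge. If $\lambda_u = 0$, then every edge incident to $u$ is forbidden and may be deleted, again reducing the edge count. If $\lambda_v,\lambda_u \geq 1$ but $u$ has no neighbor other than $v$, then $\{u,v\}$ is a connected component of $F$ and $\mathrm{BD}^{\vec{\lambda}}(F)$ is the simplicial join $\mathrm{BD}^{\vec{\lambda}}(\{e\}) \ast \mathrm{BD}^{\vec{\lambda}}(F\sm\{u,v\})$; the first factor is a single point, so the entire complex is contractible.

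In the remaining case we have $\lambda_v,\lambda_u\geq 1$ and $u$ has a second neighbor. I would apply the standard star/deletion decomposition
\[
\mathrm{BD}^{\vec{\lambda}}(F) = \mathrm{del}(e)\cup\mathrm{st}(e),\qquad \mathrm{del}(e)\cap\mathrm{st}(e) = \mathrm{lk}(e),
\]
and identify $\mathrm{del}(e) \cong \mathrm{BD}^{\vec{\lambda}|_{F-v}}(F-v)$ while $\mathrm{lk}(e) \cong \mathrm{BD}^{\vec{\lambda}'}(F-v)$, where $\vec{\lambda}'$ agrees with $\vec{\lambda}$ except that $\lambda'_u = \lambda_u-1$. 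Both are bounded degree complexes of forests with strictly fewer edges, so by induction each is either contractible or a wedge of spheres. Since $\mathrm{st}(e)$ is a cone, $\mathrm{BD}^{\vec{\lambda}}(F)$ is the mapping cone of the inclusion $\iota\colon\mathrm{lk}(e)\hookrightarrow\mathrm{del}(e)$, and it would suffice to show that $\iota$ is null-homotopic, yielding $\mathrm{BD}^{\vec{\lambda}}(F) \simeq \mathrm{del}(e) \vee \Sigma\,\mathrm{lk}(e)$.

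The main obstacle is precisely this null-homotopy. The natural candidate for a contractible subcomplex of $\mathrm{del}(e)$ through which $\iota$ factors is the closed star of $f = uw$, where $w$ is a second neighbor of $u$: every face $H$ of $\mathrm{lk}(e)$ satisfies $\deg_H(u)\leq\lambda_u-1$, so $f$ can be added at $u$, but the constraint $\deg_H(w)\leq\lambda_w$ may fail if $\lambda_w$ is small or $w$ is already saturated. To repair this I expect to need either a secondary induction (on $\sum_i \lambda_i$, or by first reducing at $w$ in $\mathrm{del}(e)$), or an iterated discrete Morse matching built from successive leaf reductions that exploits the acyclicity of $F$ to collapse each obstruction in turn. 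Tracking the wedge summand structure through these collapses, and verifying that contractibility propagates whenever any intermediate piece becomes contractible, will be the main technical burden.
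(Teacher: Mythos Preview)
Your inductive framework is right, but you have made the null-homotopy step harder than necessary by deleting the wrong edge. The obstacle you identify at $w$ is genuine for your choice $e = uv$, yet it vanishes if you swap the roles of the two edges at $u$.

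Concretely, keep your leaf $v$ with neighbor $u$ and a second neighbor $w$ of $u$, but run the star/deletion decomposition at $f = uw$ rather than at $e = uv$. Then $\mathrm{del}(f) = \mathrm{BD}^{\vec{\lambda}}(F\sm f)$ and $\mathrm{lk}(f) = \mathrm{BD}^{\vec{\lambda}''}(F\sm f)$, where $\vec{\lambda}''$ decreases by $1$ at both $u$ and $w$. Now the leaf edge $e = uv$ is the cone apex: every face $H \in \mathrm{lk}(f)$ has $\deg_H(u) \le \lambda_u - 1$, so $H \cup \{e\}$ still meets the bound at $u$, while at $v$ the degree goes from $0$ to $1$, which is automatic since $v$ is a leaf. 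Hence $\mathrm{lk}(f) \subseteq C_e(\mathrm{lk}(f)) \subseteq \mathrm{del}(f)$, and the inclusion is null-homotopic without any secondary induction or Morse theory. This is exactly the paper's argument, phrased there via Marietti--Testa's notion of a \emph{combinatorial grape}.

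One further gap: the grape/mapping-cone argument only shows that each connected component of $\mathrm{BD}^{\vec{\lambda}}(F)$ is contractible or a wedge of spheres. The paper closes with a short separate lemma: if $\mathrm{BD}^{\vec{\lambda}}(G)$ has two components each containing an edge, then $G$ must contain a $4$-cycle, which is impossible in a forest. You should include this connectedness step as well.
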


This article is organized as follows: In Section $2$, we present some definitions and results which are crucial to this article. In Section $3$, we prove \Cref{theorem:BD of forests main result}. In Section $4$, we determine the explicit homotopy type of the bounded degree complexes of those caterpillar graphs in which every non-leaf vertex is adjacent to at least one leaf vertex.

\section{Preliminaries}
A {\itshape graph} is an ordered pair $G=(V,E)$ where $V$ is called the set of vertices and $E \subseteq V \times V$, the set of unordered edges of $G$. The vertices $v_1, v_2 \in V$ are said to be adjacent, if $(v_1,v_2)\in E$. The number of vertices adjacent to a vertex $v$ is called {\itshape degree} of $v$, denoted deg$(v)$. If deg$(v)=1$, then $v$ is called a {\itshape leaf} vertex. A vertex $v$ is said to be {\itshape adjacent} to an edge $e$, if $v$ is an end point of $e$, {\itshape i.e.}, $e=(v, w)$. Two graphs $G$ and $H$ are called {\itshape isomorphic}, denoted $G\cong H$, if there exists a bijection, $f : V(G) \to V(H)$ such that $(v,w) \in E(G)$ if and only if $(f(v),f(w)) \in E(H).$ 

A graph $H$ with $V(H) \subseteq V(G)$ and $E(H) \subseteq E(G)$ is called a {\it subgraph} of the graph $G$. For a nonempty subset $H$ of $E(G)$, the induced subgraph $G[H]$, is the subgraph of $G$ with edges $E(G[H]) = H$ and $V(G[H]) = \{a \in  V(G)  :  a \text{ is adjacent to $e$ for some }e \in H\}$. For a nonempty subset $U$ of $V(G)$, the induced subgraph $G[U]$, is the subgraph of $G$ with vertices $V(G[U]) = U$ and $E(G[U]) = \{(a, b) \in E(G) \ | \ a, b \in U\}$.  

For $n \geq 1$, the {\itshape path graph} of length $n$, denoted $P_n$, is a graph with vertex set $V(P_n) = \{1, \ldots, n\}$ and  edge set $E(P_n) = \{(i,i+1) \ |  \ 1 \leq i \leq n-1\}$. A {\itshape tree} is a graph in which any two vertices are connected by exactly one path. The following graphs are a special class of trees.

\begin{definition}\label{def:caterpillar graph}
\normalfont A {\itshape caterpillar graph} is a tree in which every vertex is on a central path or only one edge away from the path (see \Cref{fig:caterpillar graph} for examples). 
\end{definition}

A caterpillar graph of length $n$ is denoted by $G_n(m_1,\dots,m_n)$, where $n$ represents the length of the central path and $m_i$ denote the number of leaves adjacent to the $i^{\mathrm{th}}$ vertex of the central path.

\begin{definition}\label{def:perfect caterpillar graphs}
\normalfont A caterpillar graph $G_n(m_1,\dots,m_n)$ is called {\itshape perfect} if $m_1=\dots=m_n=m$ and is denoted by $G_n(m)$.
\end{definition}

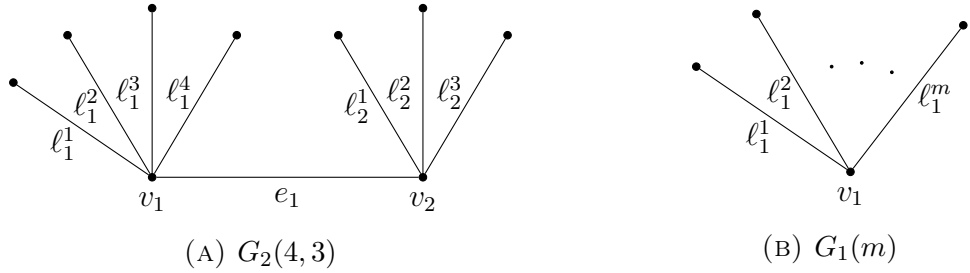
\begin{figure}[H]
	\begin{subfigure}[]{0.55 \textwidth}
		\centering
		\begin{tikzpicture}
 [scale=0.45, vertices/.style={draw, fill=black, circle, inner sep=1.0pt}]
        \node[vertices, label=below:{$v_1$}] (v1) at (0,0)  {};
		\node[vertices, label=below:{$v_2$}] (v2) at (8,0)  {};
		\node[vertices] (l11) at (-4.1,2.8)  {};
		\node[vertices] (l12) at (-2.5,4.2)  {};
		\node[vertices] (l13) at (0,5)  {};
		\node[vertices] (l14) at (2.5,4.2)  {};
		\node[vertices] (l21) at (5.5,4.2)  {};
		\node[vertices] (l22) at (8,5)  {};
		\node[vertices] (l23) at (10.5,4.2)  {};
		
\foreach \to/\from in {v1/v2}
\path (v1) edge node[pos=0.5,below] {$e_1$} (v2);
\path (v1) edge node[pos=0.65,below] {$\ell_1^1$} (l11);
\path (v1) edge node[pos=0.5,left] {$\ell_1^2$} (l12);
\path (v1) edge node[pos=0.5,left] {$\ell_1^3$} (l13);
\path (v1) edge node[pos=0.6,left] {$\ell_1^4$} (l14);
\path (v2) edge node[pos=0.5,left] {$\ell_2^1$} (l21);
\path (v2) edge node[pos=0.5,left] {$\ell_2^2$} (l22);
\path (v2) edge node[pos=0.6,left] {$\ell_2^3$} (l23);
\end{tikzpicture}\caption{$G_2(4,3)$}\label{fig:G243}
	\end{subfigure}
	\begin{subfigure}[]{0.35 \textwidth}
		\centering
	\begin{tikzpicture}
 [scale=0.5, vertices/.style={draw, fill=black, circle, inner sep=1.0pt}]
        \node[vertices, label=below:{$v_1$}] (v1) at (0,0)  {};
		\node[vertices] (l11) at (-4.1,2.8)  {};
		\node[vertices] (l12) at (-2.5,4.2)  {};
		\node[vertices] (l1m) at (3.0,3.9)  {};
		\node[vertices,inner sep=0.3pt] (d1) at (-0.5,2.8)  {};
		\node[vertices,inner sep=0.3pt] (d2) at (0.3,2.9)  {};
		\node[vertices,inner sep=0.3pt] (d3) at (1.1,2.65)  {};
		
\foreach \to/\from in {v1/l11}
\path (v1) edge node[pos=0.6,below] {$\ell_1^1$} (l11);
\path (v1) edge node[pos=0.5,left] {$\ell_1^2$} (l12);
\path (v1) edge node[pos=0.5,right] {$\ell_1^m$} (l1m);
\end{tikzpicture}\caption{$G_1(m)$}\label{fig:G1m}
	\end{subfigure}
	\caption{Caterpillar graphs} \label{fig:caterpillar graph}
\end{figure}

An {\itshape (abstract) simplicial complex} $K$ is a collection of finite sets such that if $\sigma \in K$ and $\tau \subseteq \sigma$, then $\tau \in K$. The elements  of $K$ are called {\itshape simplices} of $K$.  If $\sigma \in K$ and $|\sigma |=k+1$, then $\sigma$ is said to be {\it $k$-dimensional}. Further, if $\si \in K$ and $\tau \subseteq \si$ then $\tau$ is called a {\itshape face} of $\si$ and if $\tau \neq \si$ then $\tau$ is called a {\itshape proper face} of $\si$. The set of $0$-dimensional simplices of $K$ is denoted by $V(K)$, and its elements are called {\it vertices} of $K$. A {\it subcomplex} of a simplicial complex $K$ is a simplicial complex whose simplices are contained in $K$. For $k\geq 0$, the {\itshape $k$-skeleton} of a simplicial complex $K$ is the collection of all those simplices of $K$ whose dimension is at most $k$.

For a simplex $\sigma \in K$, define
\begin{align*}
(K:\sigma) & := \{\tau \in K : \sigma \cap \tau = \emptyset, ~\sigma \cup \tau \in K\}, \text{ and} \\
(K,\sigma) & := \{\tau \in K : \sigma \nsubseteq \tau \}.
\end{align*}

The simplicial complexes $(K:\sigma)$ and $(K,\sigma)$ are called {\itshape link} of $\sigma$ in $K$ and {\itshape face-deletion} of $\sigma$ respectively. The {\itshape join} of two simplicial complexes $K_1$ and $K_2$, denoted $K_1 \ast K_2$, is a simplicial complex whose simplices are disjoint union of simplices of $K_1$ and of $K_2$. The {\itshape cone} on $K$ with apex $a$, denoted $C_a(K)$, is defined as 
$$C_a(K) := K \ast \{\emptyset, a \}.$$ 

Note that, for any vertex $a\in V(K)$, we have
 \begin{equation}\label{eq:kasunionoflinkanddeletion}
 K= C_a(K:a) \cup (K,a) \text{ and } C_a(K:a) \cap (K,a)= (K:a).
 \end{equation}

For $a,b\notin V(K)$, the {\itshape suspension} of $K$, denoted $\Sigma(K)$, is defined as $$\Sigma (K)  := K \ast \{\emptyset , a, b\}.$$ 
 
For a simplicial complex $K$, let ${\Sigma}^r(K)$ denote its $r$-fold suspension, where $r\geq 1$ is a natural number. For $r\geq 0$, if $\S^{r}$ denotes $r$-dimensional sphere, then there is a homotopy equivalence
 \begin{equation}\label{eq:join of space with spheres is suspension}  \S^{r} \ast K \simeq \Sigma^{r+1}(K).
\end{equation}

\begin{definition}[{\cite[Definition 3.2]{MT08}}]\label{def:grape}
A simplicial complex $K$ is a combinatorial grape if
\begin{enumerate}
\item there is $a\in V(K)$ such that $(K:a)$ is contained in a cone contained in $(K,a)$ and both $(K:a)$ and $(K,a)$ are combinatorial grapes, or
\item $K$ has at most one vertex.
\end{enumerate}
\end{definition}

If $(K:a)$ is containded in a cone contained in $(K,a)$ for some $a \in V(K)$, then from \Cref{eq:kasunionoflinkanddeletion} and \cite[Proposition 0.18]{H02} we get the following homotopy equivalence \begin{equation}\label{eq:kaswedgeoflink}
K \simeq (K,a) \bigvee \Sigma (K:a).
\end{equation}

In this article, whenever we say that a simplicial complex is a grape, we shall mean that it is a combinatorial grape. The notion of grape was introduced by Marietti and Testa in \cite{MT08}. They proved the following result.

\begin{proposition}[{\cite[Proposition 3.3]{MT08}}]\label{prop:grapeimpliessphere}
If a simplicial complex $K$ is a grape, then each connected component of $K$ is either contractible or homotopy equivalent to a wedge of spheres.
\end{proposition}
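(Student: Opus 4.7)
The plan is induction on $|V(K)|$. The base case $|V(K)| \le 1$ is immediate: $K$ is either the void complex, the complex $\{\emptyset\}$, or a one-vertex complex, and in every case each of its (at most one) connected components is contractible.

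For the inductive step, let $K$ be a grape with $|V(K)| = n \ge 2$ and assume the conclusion for all grapes on fewer than $n$ vertices. By \Cref{def:grape}, choose $a \in V(K)$ such that $(K:a)$ is contained in a cone inside $(K,a)$ and such that both $(K:a)$ and $(K,a)$ are grapes. Neither of these complexes contains $a$ in its vertex set, so both have strictly fewer than $n$ vertices, and the inductive hypothesis applies: every connected component of $(K,a)$ and of $(K:a)$ is contractible or homotopy equivalent to a wedge of spheres. Applying \eqref{eq:kaswedgeoflink} gives
\[
K \;\simeq\; (K,a) \vee \Sigma(K:a).
\]

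The central geometric step is to show that $\Sigma(K:a)$ is itself a wedge of spheres up to homotopy. If $(K:a) = \emptyset$ then $\Sigma(K:a) = \S^0$, and the wedge merely adjoins an isolated point to $(K,a)$. Otherwise, decompose $(K:a) = D_1 \sqcup \cdots \sqcup D_m$ into connected components; by the inductive hypothesis each $D_i$ is contractible or a wedge of spheres, so each $\Sigma D_i$ is contractible or a wedge of spheres (with sphere dimensions shifted up by one, using \eqref{eq:join of space with spheres is suspension}). A standard manipulation, based on collapsing a contractible arc through each $D_i$ joining the two apex vertices of $\Sigma(K:a)$, yields
\[
\Sigma(K:a) \;\simeq\; \bigvee_{i=1}^{m} \Sigma D_i \;\vee\; \bigvee_{i=1}^{m-1} \S^1,
\]
which is a wedge of spheres.

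To conclude, since $\Sigma(K:a)$ is path-connected when $(K:a)\neq\emptyset$, the basepoint of the wedge $K \simeq (K,a) \vee \Sigma(K:a)$ lies in a single connected component $C_j$ of $(K,a)$. Therefore the connected components of $K$ are the components $C_i$ of $(K,a)$ with $i \neq j$, each contractible or a wedge of spheres by induction, together with the single component $C_j \vee \Sigma(K:a)$, which is a wedge of two spaces each contractible or a wedge of spheres and is therefore itself contractible or a wedge of spheres. This closes the induction. The main obstacle is the analysis of $\Sigma$ on a possibly disconnected link and the clean identification of which component of $(K,a)$ absorbs the suspended wedge summand; both are routine but must be tracked with care, especially in the degenerate cases where $(K:a)$ is empty or where the relevant components are contractible rather than genuinely spherical.
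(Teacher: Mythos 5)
Note first that the paper itself does not prove this proposition: it is quoted verbatim from \cite[Proposition 3.3]{MT08}, so there is no in-paper argument to compare against. Your proof is correct and is precisely the induction that the definition of a grape is built to support: induct on $|V(K)|$, pick the vertex $a$ furnished by \Cref{def:grape}, and apply the splitting $K \simeq (K,a) \vee \Sigma(K:a)$ of \eqref{eq:kaswedgeoflink}, which is also how \cite{MT08} proceeds. You correctly identify and handle the only two delicate points, namely that the (unreduced) suspension of a disconnected link is still a wedge of spheres --- contributing an extra $\S^1$ for each additional component, and an $\S^0$ in the degenerate case $(K:a)=\{\emptyset\}$, where $a$ is an isolated vertex of $K$ --- and that the summand $\Sigma(K:a)$ is connected and attaches to the unique component of $(K,a)$ containing the cone through which the inclusion $(K:a)\hookrightarrow (K,a)$ factors, so that the component-by-component conclusion follows from the inductive hypothesis.
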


\begin{definition}
\normalfont For $k\geq 1$, a \emph{$k$-matching} of a graph $G$ is subset of edges $H\subseteq E(G)$ such that any vertex $v \in G[H]$ has degree at most $k$. The \emph{$k$-matching complex} of a graph $G$, denoted $M_k(G)$, is a simplicial complex whose vertices are the edges of $G$ and faces are given by $k$-matchings of $G$. 
\end{definition}
 
\begin{definition}
\normalfont Let $\vec{\lambda}=(\lambda_1,\dots,\lambda_n)$ be an arbitrary sequence of non-negative integers and $G$ be a graph with vertex set $\{v_1,\dots,v_n\}$. The \emph{bounded degree complex}, denoted $\text{BD}^{\vec{\lambda}}(G)$, is a simplicial complex whose vertices are the edges of $G$, and faces are the subsets $H\subseteq E(G)$ such that the degree of vertex $v_i$ in the induced subgraph $G[H]$ is at most $\lambda_i$, for each $i \in \{1,\dots,n\}$. 
\end{definition}

{\bf Example:}
\Cref{fig:example of BD} consists of a graph $G$ on $5$ vertices and BD$^{\vec{\lambda}}(G)$ for $\vec{\lambda}=(2,1,1,1,1)$. The complex BD$^{\vec{\lambda}}(G)$ consists of $3$ maximal simplices, namely $\{e_1,\ell_1^1\}, \{e_1,\ell_1^2\}$ and $\{\ell_1^1,\ell_1^2,\ell_2^1\}$.

\begin{figure}[H]
	\begin{subfigure}[]{0.45 \textwidth}
		\centering
		\begin{tikzpicture}
 [scale=0.4, vertices/.style={draw, fill=black, circle, inner sep=1.0pt}]
        \node[vertices, label=below:{$v_1$}] (v1) at (0,0)  {};
		\node[vertices, label=below:{$v_2$}] (v2) at (8,0)  {};
		\node[vertices, label=above:{$v_3$}] (l11) at (-2.5,3.2)  {};
		\node[vertices, label=above:{$v_4$}] (l12) at (0,4)  {};
		\node[vertices, label=above:{$v_5$}] (l21) at (7,3.7)  {};
		
\foreach \to/\from in {v1/v2}
\path (v1) edge node[pos=0.5,below] {$e_1$} (v2);
\path (v1) edge node[pos=0.5,left] {$\ell_1^1$} (l11);
\path (v1) edge node[pos=0.5,left] {$\ell_1^2$} (l12);
\path (v2) edge node[pos=0.5,left] {$\ell_2^1$} (l21);
\end{tikzpicture}\caption{$G_2(2,1)$}\label{fig:G221}
	\end{subfigure}
	\begin{subfigure}[]{0.45 \textwidth}
		\centering
	\begin{tikzpicture}
 [scale=0.28, vertices/.style={draw, fill=black, circle, inner sep=0.5pt}]
\node[vertices, label=below:{$\ell_1^1$}] (a) at (4,-4) {};
\node[vertices, label=above:{$\ell_1^2$}] (b) at (4,4) {};
\node[vertices, label=left:{$e_1$}] (c) at (-3,0) {};
\node[vertices, label=right:{$\ell_2^1$}] (f) at (11,0) {};

\foreach \to/\from in {a/b,a/c,a/f,b/c,b/f}
\draw [-] (\to)--(\from);
\filldraw[fill=gray!60, draw=black] (4,-4)--(4,4)--(11,0)--cycle;
\end{tikzpicture}\caption{$\mathrm{BD}^{(2,1,1,1,1)}(G_2(2,1))$}\label{fig:BD of G}
	\end{subfigure}
	\caption{} \label{fig:example of BD}
\end{figure}

When $\vec{\lambda} = (\underbrace{k,\dots,k}_{n-\text{times}})$, we write BD$^k(G):=$ BD$^{\vec{\lambda}}(G)$. Clearly, BD$^k(G)$ is the $k$-matching complex $M_k(G)$.

The following results will be used repeatedly in this article. 

\begin{proposition}\label{prop:BD of disjoint union}
Let $G$ and $H$ be two graphs such that $V(G)=\{v_1,\dots,v_m\}$ and $V(H)=\{v_{m+1},\dots,v_{m+n}\}$. Then,
$$\mathrm{BD}^{(\lambda_1,\dots,\lambda_{m+n})}(G \sqcup H) = \mathrm{BD}^{(\lambda_1,\dots,\lambda_{m})}(G) \ast \mathrm{BD}^{(\lambda_{m+1},\dots,\lambda_{m+n})}(H),$$
where $\ast$ denotes the join operation.
\end{proposition}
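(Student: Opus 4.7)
The statement is essentially a routine unpacking of the two definitions involved, and my plan is to verify the equality of the simplicial complexes by comparing their face sets directly.

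First I would check that both complexes have the same vertex set. Since $G$ and $H$ are (vertex) disjoint, $E(G\sqcup H) = E(G)\sqcup E(H)$, and by definition the join $K_1\ast K_2$ has vertex set $V(K_1)\sqcup V(K_2)$. So both sides have vertex set $E(G)\sqcup E(H)$.

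Next I would take an arbitrary subset $F\subseteq E(G\sqcup H)$ and decompose it uniquely as $F = F_G\sqcup F_H$, where $F_G := F\cap E(G)$ and $F_H := F\cap E(H)$. The key observation is that because $V(G)$ and $V(H)$ are disjoint, no edge of $H$ is incident to any vertex of $G$, and vice versa. Consequently, for every $i\in\{1,\dots,m\}$ the degree of $v_i$ in $(G\sqcup H)[F]$ equals its degree in $G[F_G]$, and for every $i\in\{m+1,\dots,m+n\}$ the degree of $v_i$ in $(G\sqcup H)[F]$ equals its degree in $H[F_H]$. Therefore the family of degree constraints defining $\mathrm{BD}^{(\lambda_1,\dots,\lambda_{m+n})}(G\sqcup H)$ splits into two independent families: one depending only on $F_G$, one depending only on $F_H$.

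From this splitting, $F\in\mathrm{BD}^{(\lambda_1,\dots,\lambda_{m+n})}(G\sqcup H)$ if and only if $F_G\in\mathrm{BD}^{(\lambda_1,\dots,\lambda_m)}(G)$ and $F_H\in\mathrm{BD}^{(\lambda_{m+1},\dots,\lambda_{m+n})}(H)$. By the definition of the simplicial join, this is precisely the condition that $F = F_G\sqcup F_H$ is a face of $\mathrm{BD}^{(\lambda_1,\dots,\lambda_m)}(G)\ast \mathrm{BD}^{(\lambda_{m+1},\dots,\lambda_{m+n})}(H)$. Hence the two complexes coincide as sets of simplices, which gives the desired equality. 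There is no real obstacle here; the only point one has to be careful about is invoking the vertex-disjointness of $G$ and $H$ to conclude that the degree constraints decouple, which is exactly what makes a join (rather than some more complicated gluing) appear on the right-hand side.
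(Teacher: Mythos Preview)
Your proposal is correct and matches the paper's approach: the paper simply remarks that the proposition follows directly from the definition of the bounded degree complex, and your argument is exactly the routine unpacking of that definition together with the definition of the join.
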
 

Proof of \Cref{prop:BD of disjoint union} follows directly from the definition of bounded degree complex.

\begin{lemma}[{\cite[Lemma 2.5]{BW95}}]\label{lemma:join of spheres} Suppose that $K, K_1$ and $K_2$ are finite simplicial complexes.
\begin{enumerate}
    \item If $K_1$ and $K_2$ both have the homotopy type of a wedge of spheres, then so does $K_1\ast K_2$.
    \item If $K$ has the homotopy type of a wedge of spheres, then so does $\Sigma(K)$.
    \item $\Big{(}\bigvee\limits_{i} \S^{a_i}\Big{)} \ast \Big{(}\bigvee\limits_{j} \S^{b_j}\Big{)} \simeq \bigvee\limits_{i,j} \S^{a_i+b_j+1}.$
\end{enumerate}
\end{lemma}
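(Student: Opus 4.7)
The plan is to prove part (3) first, since parts (1) and (2) will then follow essentially for free. The key inputs are three classical facts that can be established independently: (a) for spheres, there is a homeomorphism $\S^a \ast \S^b \cong \S^{a+b+1}$, which follows from the identification of the join with the boundary $\partial(D^{a+1} \times D^{b+1})$; (b) $\Sigma(K) = \S^0 \ast K$, which is immediate from the definition of suspension given in the preliminaries; and (c) the join is a homotopy functor on finite CW (in particular simplicial) complexes, so that $K_1 \simeq K_1'$ and $K_2 \simeq K_2'$ imply $K_1 \ast K_2 \simeq K_1' \ast K_2'$.

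For part (3), I would establish the distributivity of join over wedge, namely that for pointed finite CW complexes one has a homotopy equivalence $(A \vee B) \ast C \simeq (A \ast C) \vee (B \ast C)$. The cleanest route is via the standard identification $X \ast Y \simeq \Sigma(X \wedge Y)$ for pointed CW complexes with nondegenerate basepoints, combined with the facts that the smash product distributes over wedges and that suspension commutes with wedges. Iterating this distributivity on both factors and then applying (a) yields
\[
\Big(\bigvee_i \S^{a_i}\Big) \ast \Big(\bigvee_j \S^{b_j}\Big) \;\simeq\; \bigvee_{i,j} \bigl(\S^{a_i} \ast \S^{b_j}\bigr) \;\simeq\; \bigvee_{i,j} \S^{a_i + b_j + 1},
\]
which is exactly (3).

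Part (2) then follows directly: writing $\Sigma(K) = \S^0 \ast K$, if $K \simeq \bigvee_i \S^{a_i}$ then by (c) and part (3) (taking the first wedge to be the single sphere $\S^0$) we get $\Sigma(K) \simeq \bigvee_i \S^{a_i + 1}$, a wedge of spheres. For part (1), if $K_1 \simeq \bigvee_i \S^{a_i}$ and $K_2 \simeq \bigvee_j \S^{b_j}$, then (c) lets us replace each $K_\ell$ by its wedge-of-spheres model inside the join, and part (3) identifies the resulting join as the wedge $\bigvee_{i,j} \S^{a_i+b_j+1}$.

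The main obstacle I expect is the careful handling of basepoints in the distributivity step, since the wedge $A \vee B$ requires chosen basepoints whereas the join is intrinsically unpointed. This is resolved by working with finite simplicial complexes throughout (picking any $0$-simplex as a basepoint gives a nondegenerate pair) and invoking the standard equivalence with $\Sigma(X \wedge Y)$; once one is in the pointed CW category, the algebraic manipulations with $\wedge$ and $\Sigma$ are routine. After that, the rest of the proof amounts to bookkeeping.
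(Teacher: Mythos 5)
The paper offers no proof of this lemma: it is quoted verbatim from \cite[Lemma 2.5]{BW95} and used as a black box. Your argument is the standard one (and essentially the proof given in that reference): reduce to part (3) via the identification $X \ast Y \simeq \Sigma(X \wedge Y)$ for well-pointed CW complexes, distribute smash over wedge, and use $\S^a \ast \S^b \cong \S^{a+b+1}$ together with $\Sigma(K) = \S^0 \ast K$ and homotopy invariance of the join on cofibrant spaces. This is correct; the only points worth making explicit in a full write-up are the degenerate cases (an empty wedge, i.e.\ a point or the complex $\{\emptyset\}$, joined with anything), which you have implicitly handled by working in the finite simplicial category.
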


Henceforth, $[n]$ will denote the set $\{1,\dots,n\}$.

\section{Proof of \texorpdfstring{\Cref{theorem:BD of forests main result}}{sthm}}

The aim of this section is to determine the homotopy type of the bounded degree complexes of forests. To do so, we first prove a general result.

\begin{lemma}\label{lemma:homotopy of BD of tree}
Let $F$ be a forest on $n$ vertices and let $\vec{\lambda}=(\lambda_1,\dots,\lambda_n)$ be a labeling of the vertices of $F$ by non-negative integers. Then, the bounded degree complex $\mathrm{BD}^{\vec{\lambda}}(F)$ is a grape.
\end{lemma}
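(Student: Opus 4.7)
My plan is to induct on $|E(F)|$. For the base case $|E(F)| = 0$, the complex $\mathrm{BD}^{\vec{\lambda}}(F) = \{\emptyset\}$ has no vertex and is a grape by clause~(2) of \Cref{def:grape}. For the inductive step, assume $|E(F)| \ge 1$ and pick any leaf $v$ of $F$; let $e = (v, w)$ be its unique incident edge. Two easy reductions handle degenerate labelings: if $\lambda_v = 0$ then no face of $K := \mathrm{BD}^{\vec{\lambda}}(F)$ can contain $e$, and $K$ equals the bounded degree complex of $F - v$ under the restricted labeling; similarly if $\lambda_w = 0$, then $K$ equals the analogous complex of $F - w$. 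In either case the new forest has strictly fewer edges and the inductive hypothesis applies.

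Henceforth assume $\lambda_v, \lambda_w \ge 1$. The principal subcase is when $w$ is \emph{not} a leaf. Pick another neighbor $w'$ of $w$, applying the reduction above if $\lambda_{w'} = 0$, so that $\lambda_{w'} \ge 1$ and $f := (w, w')$ is a vertex of $K$. I will show $K$ is a grape with grape-vertex $a = f$ and cone apex $b = e$. First, $(K, f) = \mathrm{BD}^{\vec{\lambda}}(F - f)$ and $(K : f) = \mathrm{BD}^{\vec{\mu}}(F - f)$, where $\vec{\mu}$ agrees with $\vec{\lambda}$ except that $\mu_w = \lambda_w - 1$ and $\mu_{w'} = \lambda_{w'} - 1$; both are grapes by induction since $F - f$ has one fewer edge. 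For the cone condition, any face $H \in (K : f)$ satisfies $\deg_H(w) \le \lambda_w - 1$, so $\deg_{H \cup \{e\}}(w) \le \lambda_w$; since $v$ has $e$ as its only incident edge in $F$, $\deg_{H \cup \{e\}}(v) \le 1 \le \lambda_v$; all other degrees are unaffected; and $e \ne f$. Hence $H \cup \{e\} \in (K, f)$, proving $(K:f) \subseteq C_e((K:f)) \subseteq (K, f)$.

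The remaining subcase is when $w$ is also a leaf, so $\{v, w\}$ is a full connected component of $F$. By \Cref{prop:BD of disjoint union}, $K = \mathrm{BD}^{(\lambda_v, \lambda_w)}(\{e\}) \ast L$, where $L$ is the bounded degree complex of the forest obtained by removing that component; since $\lambda_v, \lambda_w \ge 1$, the first factor is a single point, so $K = C_e(L)$, a cone over the grape $L$ (by induction). To finish, I prove the auxiliary lemma that the cone $C_a(L)$ over a grape $L$ is itself a grape, by a secondary induction on $|V(L)|$: in the step, if $c$ is a grape-vertex of $L$ with witnessing cone $D \subseteq (L, c)$ containing $(L:c)$, then $(C_a(L) : c) = C_a((L:c))$ and $(C_a(L), c) = C_a((L, c))$ are cones over grapes on fewer vertices (grapes by the inner hypothesis), and $C_a(D)$ sits inside $(C_a(L), c)$ as the required cone containing $(C_a(L) : c)$.

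The main obstacle is choosing the correct grape-vertex in the principal subcase. The naive choice $a = e$ fails because $(K:e)$ only tightens the degree bound at $w$ (trivially at $v$), and no fixed second edge can be added uniformly to every face of $(K:e)$ without risking a saturated constraint at one of its other endpoints. The winning choice is the inner edge $f = (w, w')$: it simultaneously reduces the degree bounds at \emph{both} endpoints of $f$ in $(K:f)$, and then the leaf edge $e$ fits as a uniform cone apex precisely because its only constrained endpoint is $w$, whose bound is already $\lambda_w - 1$ in the link.
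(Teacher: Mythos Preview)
Your proof is correct and follows essentially the same route as the paper's: induct on the number of edges, locate a leaf with a path of length two emanating from it, take the inner (non-leaf) edge as the grape vertex, and use the leaf edge as the cone apex witnessing the grape condition. The only notable difference is that where the paper handles the isolated-edge case by simply noting that the complex is a cone and calling the result clear, you explicitly prove the auxiliary fact that a cone over a grape is again a grape, thereby filling in a detail the paper leaves implicit.
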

\begin{proof}
We proceed by induction on the number $m$ of edges of $F$. If $F$ has no edges, then the result is clear.

Suppose that $F$ has $m\geq 1$ edges and that all bounded degree complexes on
forests with at most $m-1$ edges are grapes. Since isolated vertices of $F$ do not affect $\mathrm{BD}^{\vec{\lambda}}(F)$, we can assume that $F$ does not have any isolated vertex. If $\lambda_i=0$ for some $i \in [n]$, then again the result is clear, as $\mathrm{BD}^{\vec{\lambda}}(F)$ is
either empty or equal to the bounded degree complex on a forest with fewer edges. If $F$ has an isolated edge $e$, then again the result is clear as the complex $\mathrm{BD}^{\vec{\lambda}}(F)$ is a cone with apex $e$. Otherwise, suppose that $\ell, v, w$ are distinct, consecutive vertices of $F$ ({\itshape i.e.} $\ell, v, w$ is a path of length $3$) and that $\ell$ is a leaf. Denote by $F^\prime$ the forest obtained from $F$ by removing the edge $\{v,w\}$ and by $\vec{\lambda^\prime}$ the labeling of $F^\prime$ given by
\begin{equation*}
\vec{\lambda^\prime}(u)= 
\begin{cases}
\vec{\lambda}(u), & \mathrm{if~} u \notin \{v,w\},\\
\vec{\lambda}(u)-1, & \mathrm{if~} u \in \{v,w\}.
\end{cases}
\end{equation*}

It is easy to observe that
\begin{align*}
\big{(}\mathrm{BD}^{\vec{\lambda}}(F) , \{v,w\} \big{)} & = \mathrm{BD}^{\vec{\lambda}}(F^\prime), \mathrm{~~ and }\\
\big{(}\mathrm{BD}^{\vec{\lambda}}(F) : \{v,w\} \big{)} & = \mathrm{BD}^{\vec{\lambda^\prime}}(F^\prime).
\end{align*}

From induction, both the simplicial complexes $\mathrm{BD}^{\vec{\lambda}}(F^\prime)$ and $\mathrm{BD}^{\vec{\lambda^\prime}}(F^\prime)$ are grapes. Moreover, the cone on $\mathrm{BD}^{\vec{\lambda^\prime}}(F^\prime)$ with apex $\{\ell,v\}$ is entirely contained in $\mathrm{BD}^{\vec{\lambda}}(F^\prime)$. Therefore, we conclude that $\mathrm{BD}^{\vec{\lambda}}(F)$ is a grape.
\end{proof}

The following general result implies that the bounded degree complex of a forest can have at most one connected component that is not a single point.

\begin{lemma}\label{lemma:connectedcomponent}
 Let $G = (V, E)$ be a simple graph and let $\vec{\lambda} : V \rightarrow \mathbb{Z}_{\geq 0}$ be a labelling of the vertices of $G$. If the bounded degree complex $\mathrm{BD}^{\vec{\lambda}}(G)$ contains two connected components with more than one vertex, then $G$ contains a cycle of length $4$.
\end{lemma}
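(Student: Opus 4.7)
My plan is to pick, inside each of the two connected components with more than one vertex, a $1$-simplex of the complex, say $\{e_1, e_2\}$ in the first component and $\{f_1, f_2\}$ in the second (such a $1$-simplex exists because the component contains at least two vertices and is connected in the $1$-skeleton). The four edges $e_1, e_2, f_1, f_2$ of $G$ will, after ruling out degeneracies, form the desired $4$-cycle. The engine of the proof is the following dichotomy, which I would establish first. For any two edges $a, b$ of $G$ that are also vertices of $\mathrm{BD}^{\vec{\lambda}}(G)$: if $\{a, b\}$ is a face and $a, b$ share a vertex $v$ in $G$, then $\lambda_v \geq 2$; while if $\{a,b\}$ is not a face, then $a$ and $b$ must share a vertex $v$ with $\lambda_v = 1$. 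The ``not a face'' direction uses that the singletons $\{a\}$ and $\{b\}$ are faces, so every endpoint of $a$ or $b$ already satisfies $\lambda \geq 1$; therefore the only way the two-edge induced subgraph can violate a degree bound is at a shared endpoint, where the degree jumps to $2$.

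Next I apply this dichotomy to the six pairs $\{e_1,e_2\}$, $\{f_1,f_2\}$, and $\{e_i,f_j\}$ for $i,j\in\{1,2\}$. The first two are faces, while the last four are not, since the two components are disjoint in the $1$-skeleton. The clean case is when both $e_1, e_2$ are disjoint as edges of $G$ and $f_1, f_2$ are disjoint as edges of $G$. Then each of $e_1, e_2$ must share a vertex with each of $f_1$ and $f_2$, and disjointness of $f_1, f_2$ forces the two shared vertices of $e_i$ to be distinct. Writing out the endpoints yields $f_1 = \{p, r\}$, $f_2 = \{q, s\}$, $e_1 = \{p, q\}$, $e_2 = \{r, s\}$ with $p, q, r, s$ pairwise distinct, which produces the $4$-cycle $p \to q \to s \to r \to p$ in $G$.

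The remaining cases I expect to collapse quickly. If $e_1, e_2$ share a vertex $w$, the dichotomy gives $\lambda_w \geq 2$, so $w$ cannot be the shared vertex of any cross-pair $\{e_1, f_j\}$, since those require $\lambda = 1$. Hence the other endpoint $a_1$ of $e_1$ must lie on both $f_1$ and $f_2$, making $a_1$ a common vertex of $f_1$ and $f_2$ with simultaneously $\lambda_{a_1} \geq 2$ (from $\{f_1,f_2\}$ being a face) and $\lambda_{a_1} = 1$ (from $\{e_1,f_j\}$ being non-faces), a contradiction. The symmetric situation where $f_1, f_2$ share a vertex is handled identically. Thus the disjoint/disjoint case is forced, and the $4$-cycle appears.

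The main obstacle is purely organizational: keeping the case analysis tidy by consistently tracking the two $\lambda$-constraints, namely ``$\{a,b\}$ a face with shared endpoint forces $\lambda \geq 2$ there'' versus ``$\{a,b\}$ not a face forces $\lambda = 1$ at a shared endpoint''. No deeper idea appears to be needed, and in particular the argument makes no use of acyclicity or of the specific value of $\vec{\lambda}$ beyond these local constraints.
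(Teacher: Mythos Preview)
Your proof is correct and follows essentially the same route as the paper's: pick adjacent pairs $\{e_1,e_2\}$ and $\{f_1,f_2\}$ in the two components, force every cross-pair $\{e_i,f_j\}$ to share a vertex of $G$, eliminate the possibility that $e_1,e_2$ (or $f_1,f_2$) share a vertex, and read off the $4$-cycle from the resulting two disjoint matchings on the same four vertices. The only cosmetic difference is in how the shared-vertex case is disposed of---you derive the contradiction $\lambda_{a_1}\geq 2$ and $\lambda_{a_1}=1$ via your dichotomy, whereas the paper instead notes that $f_1$ and $f_2$ would both be forced to coincide with the edge $\{a_1,a_2\}$, violating simplicity of $G$.
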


\begin{proof} Suppose that $e_1 , e_2 , f_1 , f_2 \in E$ are edges of the graph $G$ such that
\begin{itemize}
\item $e_1 , e_2$ are adjacent as vertices of $\mathrm{BD}^{\vec{\lambda}}(G)$;
\item $f_1 , f_2$ are adjacent as vertices of $\mathrm{BD}^{\vec{\lambda}}(G)$;
\item $e_1$ and $f_1$ are in distinct connected components of $\mathrm{BD}^{\vec{\lambda}}(G)$.
\end{itemize}
We deduce that the labels of all the endpoints of $e_1 , e_2 , f_1 , f_2$ are at least $1$. Moreover, for $i, j \in \{1, 2\}$, the intersection $e_i \cap f_j$ is not empty, since otherwise we violate the condition that $e_1$ and $f_1$ belong to different connected components of $\mathrm{BD}^{\vec{\lambda}}(G)$. If $e_1 , e_2$ have a common vertex $v$, then, necessarily, $\vec{\lambda}(v) \geq 2$. Our assumptions then force $f_1$ and $f_2$ to not contain $v$, but still intersect non-trivially both edges $e_1 , e_2$ . This is impossible, since the graph $G$ is simple. By symmetry, also $f_1 , f_2$ do not have a vertex in common. We conclude that $\{e_1 , e_2 \}$ and $\{f_1 , f_2 \}$ are both matchings of G with the same support. Thus, $e_1 , f_1 , e_2 , f_2$ is a square in $G$.
\end{proof}

Combining \Cref{prop:grapeimpliessphere}, \Cref{lemma:homotopy of BD of tree} and \Cref{lemma:connectedcomponent}, we get that the bounded degree complexes of forests are either contractible or homotopy equivalent to a wedge of spheres. This completes the proof of \Cref{theorem:BD of forests main result}.
\vspace*{0.2cm}

\noindent{\bf Note:} The above proof of \Cref{theorem:BD of forests main result} is due to the anonymous referee. This can also be proved using \cite[Lemma 10.4(ii)]{B95} and \Cref{lemma:join of spheres}. The earlier version of the manuscript contained a proof along those lines. 

\vspace*{0.2cm}
For an arbitrary forest $F$, it is difficult to obtain a closed form formula for the homotopy type of $\mathrm{BD}^{\vec{\lambda}}(F)$. Here, we provide a simple recursive formula that gives the number of spheres of each dimension occurring in the homotopy type of $\mathrm{BD}^{\vec{ \lambda }}(F)$. 

Denote by $(F,\vec{\lambda})_d$ the number of $d$-dimensional spheres occurring in the homotopy type of $\mathrm{BD}^{\vec{ \lambda }}(F)$. For an edge $e$ of $F$, denote by $F\setminus e$ the forest obtained from $F$ by removing edge $e$. Let $\vec{\lambda}_e$ be the labelling of the vertices of $F\setminus e$ given by
\begin{equation*}
\vec{\lambda}_e(u)= 
\begin{cases}
\vec{\lambda}(u), & \mathrm{if~} u \notin e,\\
\vec{\lambda}(u)-1, & \mathrm{if~} u \in e.
\end{cases}
\end{equation*}

\begin{proposition}\label{prop:recurranceforforest}
Let $F$ be a forest, let $\vec{\lambda}$ be a labelling of the vertices of $F$ by non-negative integers. Suppose that $e$ is an edge of $F$ and that there is a leaf vertex $u \notin e$ adjacent to $e$. Then, for all integers $d$, the equality
\begin{equation}
(F,\vec{\lambda})_d = (F\setminus e,\vec{\lambda})_d + (F\setminus e, \vec{\lambda}_e )_{d-1}
\end{equation}
holds.
\end{proposition}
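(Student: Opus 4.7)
The plan is to apply the wedge decomposition \eqref{eq:kaswedgeoflink} at the vertex $e$ of the complex $\mathrm{BD}^{\vec{\lambda}}(F)$, and then read off the sphere count using \Cref{theorem:BD of forests main result} together with \Cref{lemma:join of spheres}.

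First I would verify directly from the definitions the two identifications
\[
\bigl(\mathrm{BD}^{\vec{\lambda}}(F),\,e\bigr) = \mathrm{BD}^{\vec{\lambda}}(F\setminus e), \qquad \bigl(\mathrm{BD}^{\vec{\lambda}}(F):e\bigr) = \mathrm{BD}^{\vec{\lambda}_e}(F\setminus e).
\]
The first is tautological: faces of $\mathrm{BD}^{\vec{\lambda}}(F)$ not containing $e$ are precisely subsets of $E(F\setminus e)$ satisfying the $\vec{\lambda}$-bounds. For the second, a face $\sigma$ with $e \notin \sigma$ satisfies $\sigma \cup \{e\} \in \mathrm{BD}^{\vec{\lambda}}(F)$ iff every vertex of $F$ has bounded degree in $F[\sigma \cup \{e\}]$; since $e$ contributes exactly one to the degree at each of its endpoints and zero elsewhere, this is equivalent to $\sigma$ being a face of $\mathrm{BD}^{\vec{\lambda}_e}(F\setminus e)$.

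Next I would verify the cone hypothesis needed to invoke \eqref{eq:kaswedgeoflink}. Let $v$ be the endpoint of $e$ adjacent to the leaf $u$, and set $f = \{u, v\} \in E(F)$. I claim that the cone on the link with apex $f$ sits inside the deletion, i.e., for every face $\sigma$ of $\mathrm{BD}^{\vec{\lambda}_e}(F \setminus e)$ the set $\sigma \cup \{f\}$ lies in $\mathrm{BD}^{\vec{\lambda}}(F \setminus e)$. Since $f$ only touches $u$ and $v$, only the degrees at these two vertices can change: $u$ is a leaf of $F$ whose unique incident edge is $f$, so the degree of $u$ in $\sigma \cup \{f\}$ is at most $1 \leq \lambda(u)$; and the degree of $v$ in $\sigma$ is at most $\vec{\lambda}_e(v) = \lambda(v) - 1$, so adjoining $f$ keeps it within $\lambda(v)$. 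It is safe to assume $\lambda(u), \lambda(v) \geq 1$, for if either is zero then $e$ is not a vertex of $\mathrm{BD}^{\vec{\lambda}}(F)$ or the edge $f$ is irrelevant to all three complexes appearing in the recurrence; in each such degenerate case the identity reduces to a smaller instance and is handled by induction on $|E(F)|$.

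Equation \eqref{eq:kaswedgeoflink} now yields
\[
\mathrm{BD}^{\vec{\lambda}}(F) \simeq \mathrm{BD}^{\vec{\lambda}}(F\setminus e) \,\vee\, \Sigma\,\mathrm{BD}^{\vec{\lambda}_e}(F\setminus e).
\]
By \Cref{theorem:BD of forests main result} each of the two summands on the right is either contractible or homotopy equivalent to a wedge of spheres, and by \Cref{lemma:join of spheres} the suspension of such a wedge is again a wedge of spheres with every sphere dimension raised by one. Counting $d$-dimensional spheres on both sides then gives the stated recurrence $(F,\vec{\lambda})_d = (F\setminus e,\vec{\lambda})_d + (F\setminus e,\vec{\lambda}_e)_{d-1}$. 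I expect the main obstacle to be the degree bookkeeping around $v$ in the cone step, together with handling the boundary cases $\lambda(u) = 0$ or $\lambda(v) = 0$ in a way that preserves the recurrence on the nose.
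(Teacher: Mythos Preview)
Your proposal is correct and follows essentially the same approach as the paper: the paper's proof simply cites the proof of \Cref{lemma:homotopy of BD of tree} (which already contains the link/deletion identifications and the cone hypothesis via the leaf edge $\{u,v\}$) together with \eqref{eq:kaswedgeoflink} to obtain the wedge decomposition $\mathrm{BD}^{\vec{\lambda}}(F) \simeq \mathrm{BD}^{\vec{\lambda}}(F\setminus e)\vee\Sigma\,\mathrm{BD}^{\vec{\lambda}_e}(F\setminus e)$. Your version spells out these steps explicitly and adds a discussion of the degenerate cases $\lambda(u)=0$ or $\lambda(v)=0$, which the paper omits.
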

\begin{proof}
From the proof of \Cref{lemma:homotopy of BD of tree} and \Cref{eq:kaswedgeoflink} we get 
 \begin{equation*}
 \begin{split}
\mathrm{BD}^{\vec{\lambda}}(F) & \simeq \big{(}\mathrm{BD}^{\vec{\lambda}}(F), e\big{)} \bigvee \Sigma \big{(} \mathrm{BD}^{\vec{\lambda}}(F) : e \big{)} \\
& \simeq \mathrm{BD}^{\vec{\lambda}}(F\setminus e)\bigvee \Sigma\big{(} \mathrm{BD}^{\vec{\lambda}_e}(F\setminus e) \big{)}.
\end{split}
 \end{equation*}
 This completes the proof of \Cref{prop:recurranceforforest}
\end{proof}

\Cref{prop:recurranceforforest} gives a recursion for computing the integers $(F,\vec{\lambda})_d$ in terms of similar numbers for subforests of $F$ with fewer edges. The recursion terminates on the subforests $F_0 \subseteq F$ that are disjoint unions of paths with at most two vertices, where the initial conditions are
\begin{equation}
(F_0,\vec{\lambda})_d =
\begin{cases}
\vspace*{0.2cm}
1, & \parbox[t]{.45\columnwidth}{if $d = -1$, $\vec{\lambda}$ is non-negative and at least one vertex of every edge of $F_0$ has label $0;$} \\ 
 0, & \mathrm{otherwise}.
\end{cases}
\end{equation}

\section{Bounded degree complexes of caterpillar graphs}

Let $\vec{\lambda}=(\lambda_1,\dots,\lambda_n)$ and $\vec{m}=(m_1,\dots,m_n)$ be two sequences of non-negative integers. The vertices and edges of the central path of graph $G_n(\vec{m})$ will be denoted by $\{v_1,\dots,v_n\}$ and $\{e_1,\dots,e_{n-1}\}$ respectively as shown in \Cref{fig:example of general caterpillar graph}.

\begin{figure}[H]
		\centering
		\begin{tikzpicture}
 [scale=0.45, vertices/.style={draw, fill=black, circle, inner sep=1.0pt}]
        \node[vertices, label=below:{$v_1$}] (v1) at (0,0)  {};
		\node[vertices, label=below:{$v_2$}] (v2) at (7.5,0)  {};
		\node[vertices, label=below:{$v_{n-2}$}] (vn2) at (13.5,0)  {};
		\node[vertices, label=below:{$v_{n-1}$}] (vn1) at (21,0)  {};
		\node[vertices, label=below:{$v_n$}] (vn) at (28,0)  {};
		\node[vertices,inner sep=0.2pt] (d1) at (9.7,0)  {};
		\node[vertices,inner sep=0.2pt] (d2) at (10.5,0)  {};
		\node[vertices,inner sep=0.2pt] (d3) at (11.3,0)  {};
		\node[vertices] (l11) at (-4.1,2.8)  {};
		\node[vertices] (l12) at (-2.5,4.2)  {};
		\node[vertices] (l1m) at (1.5,4.2)  {};
		\node[vertices,inner sep=0.2pt] (d11) at (-0.9,2.8)  {};
		\node[vertices,inner sep=0.2pt] (d12) at (-0.3,2.9)  {};
		\node[vertices,inner sep=0.2pt] (d1m) at (0.3,2.82)  {};
		\node[vertices] (l21) at (5.0,4.2)  {};
		\node[vertices] (l2m) at (9.0,4.2)  {};
		\node[vertices,inner sep=0.2pt] (d21) at (6.6,2.8)  {};
		\node[vertices,inner sep=0.2pt] (d22) at (7.2,2.9)  {};
		\node[vertices,inner sep=0.2pt] (d2m) at (7.8,2.82)  {};
		\node[vertices,inner sep=0.2pt] (dn21) at (12.6,2.8)  {};
		\node[vertices,inner sep=0.2pt] (dn22) at (13.2,2.8)  {};
		\node[vertices,inner sep=0.2pt] (dn2m) at (13.8,2.8)  {};
		\node[vertices] (ln11) at (18.5,4.2)  {};
		\node[vertices] (ln1m) at (22.5,4.2)  {};
		\node[vertices,inner sep=0.2pt] (dn11) at (20.1,2.8)  {};
		\node[vertices,inner sep=0.2pt] (dn12) at (20.7,2.9)  {};
		\node[vertices,inner sep=0.2pt] (dn1m) at (21.3,2.82)  {};
		\node[vertices] (ln1) at (26.0,4.2)  {};
		\node[vertices] (lnm) at (30,4.2)  {};
		\node[vertices,inner sep=0.2pt] (dn1) at (27.6,2.8)  {};
		\node[vertices,inner sep=0.2pt] (dn2) at (28.2,2.9)  {};
		\node[vertices,inner sep=0.2pt] (dnm) at (28.8,2.82)  {};
		
\foreach \to/\from in {v1/v2}
\path (v1) edge node[pos=0.5,below] {$e_1$} (v2);
\path (vn2) edge node[pos=0.5,below] {$e_{n-2}$} (vn1);
\path (vn1) edge node[pos=0.5,below] {$e_{n-1}$} (vn);
\path (v1) edge node[pos=0.6,below] {$\ell_1^1$} (l11);
\path (v1) edge node[pos=0.5,left] {$\ell_1^2$} (l12);
\path (v1) edge node[pos=0.5,right] {$\ell_1^{m_1}$} (l1m);
\path (v2) edge node[pos=0.5,left] {$\ell_2^{1}$} (l21);
\path (v2) edge node[pos=0.5,right] {$\ell_2^{m_2}$} (l2m);
\path (vn1) edge node[pos=0.5,left] {$\ell_{n-1}^1$} (ln11);
\path (vn1) edge node[pos=0.5,right] {$\ell_{n-1}^{m_{n-1}}$} (ln1m);
\path (vn) edge node[pos=0.5,left] {$\ell_{n}^1$} (ln1);
\path (vn) edge node[pos=0.5,right] {$\ell_{n}^{m_{n}}$} (lnm);
\end{tikzpicture}
	\caption{$G_n(m_1,\dots,m_n)$} \label{fig:example of general caterpillar graph}
\end{figure}
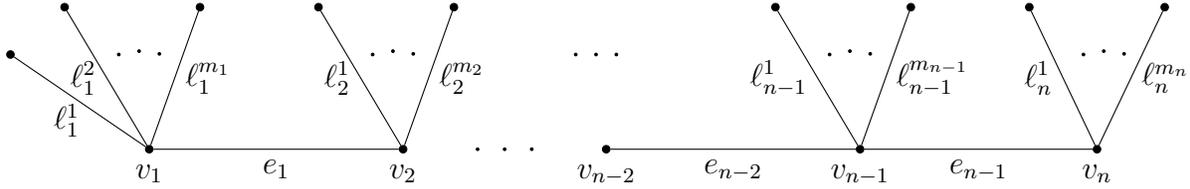

Hereafter, BD$^{\vec{\lambda}}(G_n(\vec{m}))$ will denote the bounded degree complex whose faces are subgraphs of $G_n(\vec{m})$ in which the degree of central path vertex $v_i$ is at most $\lambda_i$ for each $i \in [n]$ and degree of leaf vertices is at most $1$, {\itshape i.e.}, BD$^{\vec{\lambda}}(G_n(\vec{m}))=$ BD$^{\vec{\mu}}(G_n(\vec{m}))$, where $\vec{\mu}=(\lambda_1,\dots,\lambda_n,\underbrace{1,1\dots,1,1}_{(\sum\limits_{i=1}^{n} m_i)- \text{times}})$.

Clearly, if $\lambda_i=0$ for all $i \in [n]$ then BD$^{\vec{\lambda}}(G_n(\vec{m}))$ is the simplicial complex $\{\emptyset\}$. Therefore, we assume that $\vec{\lambda}\neq \vec{0}=(0,\dots,0)$. Since all caterpillar graphs are trees, the following result is an immediate corollary of \Cref{theorem:BD of forests main result}.

\begin{corollary}\label{theorem:caterpillarhomotopy}
Let $\vec{\lambda}=(\lambda_1,\dots,\lambda_n)$ and $\vec{m}=(m_1,\dots,m_n)$ be two sequences of non-negative integers. Then, the bounded degree complex $\mathrm{BD}^{\vec{\lambda}} (G_n(\vec{m}))$ is either contractible or homotopy equivalent to a wedge of spheres.
\end{corollary}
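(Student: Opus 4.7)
The plan is to observe that this statement is an immediate specialization of \Cref{theorem:BD of forests main result}, so the proof reduces to unpacking the notation and invoking that theorem. A caterpillar graph $G_n(\vec m)$ is, by definition, a tree, hence a forest; the only subtlety is that the symbol $\mathrm{BD}^{\vec\lambda}(G_n(\vec m))$ used here is shorthand for a bounded degree complex on $G_n(\vec m)$ whose labelling has length $n + \sum_{i=1}^n m_i$, not length $n$. Explicitly, by the convention introduced just before the corollary, we have $\mathrm{BD}^{\vec\lambda}(G_n(\vec m)) = \mathrm{BD}^{\vec\mu}(G_n(\vec m))$, where $\vec\mu$ is obtained from $\vec\lambda$ by appending a $1$ for each leaf of the caterpillar.

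Given this reformulation, the proof amounts to two sentences. First I would note that $G_n(\vec m)$ is a forest on $n + \sum_{i=1}^n m_i$ vertices and that $\vec\mu$ is a sequence of non-negative integers of the same length. Then I would apply \Cref{theorem:BD of forests main result} directly to the pair $(G_n(\vec m), \vec\mu)$, concluding that $\mathrm{BD}^{\vec\mu}(G_n(\vec m))$, and therefore $\mathrm{BD}^{\vec\lambda}(G_n(\vec m))$, is either contractible or homotopy equivalent to a wedge of spheres.

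There is no real obstacle here, since the combinatorial and topological work has already been carried out in Section~3 via \Cref{lemma:homotopy of BD of tree}, \Cref{lemma:connectedcomponent} and \Cref{prop:grapeimpliessphere}. The only point worth being careful about is making sure the reader sees that the degree constraints on the leaves (encoded by appending $1$'s in $\vec\mu$) are legitimately part of the framework of \Cref{theorem:BD of forests main result}, so that no separate argument is needed to handle them.
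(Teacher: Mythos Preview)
Your proposal is correct and matches the paper's own treatment: the paper simply states that since caterpillar graphs are trees, the result is an immediate corollary of \Cref{theorem:BD of forests main result}. Your careful unpacking of the $\vec\mu$ convention is more explicit than what the paper writes, but the argument is identical.
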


Recall that $k$-matching complex $M_k(G_n(\vec{m}))$ is same as the bounded degree complex BD$^{k}(G_n(\vec{m}))$. Therefore, \Cref{theorem:caterpillarhomotopy} implies \Cref{conj:vegas conjecture}.

A {\itshape cycle graph} on $n$ vertices, denoted $C_n$, is a graph with vertex set $[n]$ and edge set $E(C_n)= \{(i,i+1) : i \in [n-1]\} \cup \{(1,n)\}$. In \cite[Proposition 5.2]{Koz99}, Kozlov showed that the complex $M_1(C_n)$ is homotopy equivalent to a wedge of spheres. When $\vec{\lambda} \neq (1,\dots,1)$, we can use \Cref{theorem:caterpillarhomotopy} to determine the homotopy type of BD$^{\vec{\lambda}}(C_n)$. 

\begin{corollary}\label{corollary:BD of cycle graph}
For any $\vec{\lambda}=(\lambda_1,\dots,\lambda_n)$, $\mathrm{BD}^{\vec{\lambda}}(C_n)$ is either contractible or homotopy equivalent to a wedge of spheres.
\end{corollary}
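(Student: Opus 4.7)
Proof proposal.

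The plan is to split into two cases according to whether $\vec{\lambda}$ equals $(1,\ldots,1)$ or not; in the former I would cite Kozlov, and in the latter I would reduce the cycle to a path by ``splitting'' a well-chosen vertex, and then invoke \Cref{theorem:caterpillarhomotopy}.

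\emph{Case A: $\vec{\lambda} = (1,\ldots,1)$.} Here $\mathrm{BD}^{\vec{\lambda}}(C_n) = M_1(C_n)$, and \cite[Proposition 5.2]{Koz99} gives that $M_1(C_n)$ is homotopy equivalent to a wedge of spheres.

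\emph{Case B: $\vec{\lambda} \neq (1,\ldots,1)$.} Choose $j \in [n]$ with $\lambda_j \neq 1$, so $\lambda_j = 0$ or $\lambda_j \geq 2$. I would ``cut'' the cycle at $v_j$: let $P$ be the graph obtained from $C_n$ by splitting $v_j$ into two new vertices $v_j^-$ and $v_j^+$, each inheriting exactly one of the two edges formerly incident to $v_j$. A direct check shows $P$ is a path on $n+1$ vertices, hence a caterpillar. Define labels $\vec{\mu}$ by $\mu_{v_i} = \lambda_i$ for $i \neq j$, and $\mu_{v_j^-} = \mu_{v_j^+} = 0$ if $\lambda_j = 0$, or $\mu_{v_j^-} = \mu_{v_j^+} = 1$ if $\lambda_j \geq 2$. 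Then I would argue
\[
\mathrm{BD}^{\vec{\lambda}}(C_n) = \mathrm{BD}^{\vec{\mu}}(P),
\]
and then \Cref{theorem:caterpillarhomotopy} applied to $\mathrm{BD}^{\vec{\mu}}(P)$ gives the claim.

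To justify the displayed identity, use the natural bijection $E(C_n) \leftrightarrow E(P)$ (the two edges formerly at $v_j$ becoming the two leaf edges of $P$); under it, the constraints at $v_i$ for $i \neq j$ automatically agree on both sides. The only remaining point is that the constraint at $v_j$ in the cycle matches the conjunction of the two constraints at $v_j^\pm$ in the path. For $\lambda_j = 0$ this reads $\deg_H(v_j) = 0 \Leftrightarrow \deg_H(v_j^-) = \deg_H(v_j^+) = 0$, which is immediate. For $\lambda_j \geq 2$ both sides are vacuous since $\deg_{C_n}(v_j) \leq 2 \leq \lambda_j$ and $\deg_P(v_j^\pm) \leq 1$. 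The main (and essentially only) obstacle is that this reduction genuinely fails when $\lambda_j = 1$: in that case the cycle condition $\deg_H(v_j) \leq 1$ cannot be factored as independent constraints on $v_j^-$ and $v_j^+$ (no choice of $\mu_{v_j^\pm} \in \{0,1\}$ has the correct effect, since a conjunctive bound either allows or forbids each edge independently). This is exactly why Kozlov's dedicated result on $M_1(C_n)$ is required to dispose of Case A separately.
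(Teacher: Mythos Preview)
Your proof is correct and follows essentially the same approach as the paper: both cite Kozlov for $\vec{\lambda}=(1,\ldots,1)$, and otherwise open the cycle at a vertex $v_j$ with $\lambda_j\neq 1$ to reduce to the bounded degree complex of a path, then invoke \Cref{theorem:caterpillarhomotopy}. The only cosmetic differences are that for $\lambda_j=0$ the paper deletes $v_j$ outright (landing on $P_{n-1}$) rather than splitting it into two label-$0$ leaves, and for $\lambda_j\geq 2$ the paper assigns endpoint labels $(1,\lambda_j-1)$ rather than your $(1,1)$---but since in each case the endpoint constraints are vacuous or forbid the same edges, the resulting complexes coincide.
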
 
\begin{proof}
 For $\vec{\lambda}=(1,\dots,1)$, we get the result from \cite[Proposition 5.2]{Koz99}. Now, let $\vec{\lambda}\neq (1,\dots,1)$. Since the graph is cycle graph, without loss of generality, we can assume that $\lambda_n\neq 1$.
 
 If $\lambda_n=0$, then $\mathrm{BD}^{\vec{\lambda}}(C_n) = \mathrm{BD}^{(\lambda_1,\dots,\lambda_{n-1})}(C_n - \{n\}) = \mathrm{BD}^{(\lambda_1,\dots,\lambda_{n-1})}(P_{n-1})$. If $\lambda_n\geq 2$, then $\mathrm{BD}^{\vec{\lambda}}(C_n) = \mathrm{BD}^{(1,\lambda_1,\dots,\lambda_{n}-1)}(P_{n+1})$. In both the cases, result follows from \Cref{theorem:caterpillarhomotopy} (since $G_n(\vec{0}) \cong P_n$).
\end{proof}

In \cite[Theorem 5.11]{MJMV19}, authors determined the explicit homotopy type of $M_1(G_n(\vec{m}))$ where $m_i \geq 1$ for each $i \in [n]$. We generalize their result by determining the explicit homotopy type of $\mathrm{BD}^{\vec{\lambda}}(G_n(\vec{m}))$ where $m_i \geq 1$ for each $i\in [n]$. We first introduce a few notations. Denote by $P_n$ the central path of the caterpillar graph $G_n(\vec{m})$. For a subset $T \subseteq E(P_n)$ of the edges of the path $P_n$, and an index $i \in [n]$, denote by $T_i$ the degree of the $i^{\text{th}}$ vertex of the path $P_n$ in the
subgraph induced by $T$. Thus, for each $i$, the value of $T_i$ is $0, 1,$ or $2$. Also, a wedge over $0$ denotes a contractible space.

\begin{theorem}\label{lemma:exacthomotopyofcaterpillar}
Let $G_n(\vec{m})$ be a caterpillar graph such that every vertex in its central path is adjacent to at least one leaf, {\itshape i.e.}, $m_i \geq 1$ for each $i \in [n]$. Let $\vec{\lambda} = (\lambda_1, \dots , \lambda_n)$ and $|\vec{\lambda}|= \sum\limits_{i=1}^{n} \lambda_i$. Then, 
$$\mathrm{BD}^{\vec{\lambda}}(G_n(\vec{m})) \simeq \bigvee\limits_{T \subseteq E(P_n)} \left(  \bigvee\limits_{\prod_i \binom{m_i-1}{\lambda_i-T_i}} \S^{|\vec{\lambda}|- \# T-1} \right).$$
\end{theorem}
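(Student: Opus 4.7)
The plan is to induct on $n$, the length of the central path, using as main tools the wedge decomposition $\mathrm{BD}^{\vec{\lambda}}(F)\simeq \mathrm{BD}^{\vec{\lambda}}(F\sm e)\vee\Sigma\,\mathrm{BD}^{\vec{\lambda}_e}(F\sm e)$ (via \eqref{eq:kaswedgeoflink}, made explicit in \Cref{prop:recurranceforforest}), the join decomposition of \Cref{prop:BD of disjoint union}, and the sphere-join identity \Cref{lemma:join of spheres}(3). For the base case $n=1$, the graph $G_1(m_1)$ is a star with $m_1$ pendant edges, and $\mathrm{BD}^{(\lambda_1)}(G_1(m_1))$ is the $(\lambda_1-1)$-skeleton of the $(m_1-1)$-simplex, hence homotopy equivalent to $\bigvee_{\binom{m_1-1}{\lambda_1}}\S^{\lambda_1-1}$; this matches the theorem, whose only index corresponds to $T=\emptyset\subseteq E(P_1)$ with all $T_i=0$.

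For the inductive step, let $F=G_n(\vec{m})$ and take $e=e_{n-1}$. Since $m_n\geq 1$, there is a leaf outside $e$ adjacent to $e$, so \Cref{prop:recurranceforforest} applies and gives
\[
\mathrm{BD}^{\vec{\lambda}}(F)\simeq \mathrm{BD}^{\vec{\lambda}}(F\sm e)\,\vee\,\Sigma\,\mathrm{BD}^{\vec{\lambda}_e}(F\sm e).
\]
Removing $e_{n-1}$ splits $F$ into $G_{n-1}(m_1,\dots,m_{n-1})\sqcup G_1(m_n)$, so \Cref{prop:BD of disjoint union} factors each term as a join:
\begin{align*}
\mathrm{BD}^{\vec{\lambda}}(F\sm e) &= \mathrm{BD}^{(\lambda_1,\dots,\lambda_{n-1})}\!\bigl(G_{n-1}(m_1,\dots,m_{n-1})\bigr)\,\ast\, \mathrm{BD}^{(\lambda_n)}(G_1(m_n)),\\
\mathrm{BD}^{\vec{\lambda}_e}(F\sm e) &= \mathrm{BD}^{(\lambda_1,\dots,\lambda_{n-2},\lambda_{n-1}-1)}\!\bigl(G_{n-1}(m_1,\dots,m_{n-1})\bigr)\,\ast\, \mathrm{BD}^{(\lambda_n-1)}(G_1(m_n)).
\end{align*}
Applying the inductive hypothesis to the first factor on each line and the base case to the second factor, each term becomes an explicit wedge of spheres; \Cref{lemma:join of spheres}(3) then turns each join into a wedge whose sphere count is a product and whose sphere dimension is a sum plus one, and an additional suspension raises the dimension of the second line by a further one.

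The final step is to identify the two resulting wedges with the partition of subsets $T\subseteq E(P_n)$ according to whether $e_{n-1}\notin T$ or $e_{n-1}\in T$. For $e_{n-1}\notin T$ the identification is direct and uses $T_n=0$, so that $\binom{m_n-1}{\lambda_n}=\binom{m_n-1}{\lambda_n-T_n}$. For $e_{n-1}\in T$ one sets $T':=T\sm\{e_{n-1}\}$ and uses $T_{n-1}=T'_{n-1}+1$ and $T_n=1$, so that the label-shifts $\lambda_{n-1}-1$ and $\lambda_n-1$ absorb into the binomial coefficients as $\binom{m_{n-1}-1}{\lambda_{n-1}-T_{n-1}}$ and $\binom{m_n-1}{\lambda_n-T_n}$. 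In both cases, the join-plus-suspension arithmetic produces spheres of dimension $|\vec{\lambda}|-\#T-1$, and the union of the two collections recovers the claimed wedge over all $T\subseteq E(P_n)$. I expect the main obstacle to be precisely this bookkeeping: verifying that the single extra suspension in the $\vec{\lambda}_e$-summand, together with the two label decrements at $v_{n-1}$ and $v_n$, combines cleanly into the effect of enlarging $T$ by $\{e_{n-1}\}$ in both the sphere dimensions and the binomial multiplicities.
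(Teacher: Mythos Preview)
Your proposal is correct and follows essentially the same approach as the paper: both arguments repeatedly apply the grape wedge decomposition \eqref{eq:kaswedgeoflink} to the edges of the central path, use \Cref{prop:BD of disjoint union} to factor the resulting pieces as joins, invoke the skeleton-of-simplex identification for $\mathrm{BD}^{k}(G_1(r))$, and then collapse everything via \Cref{lemma:join of spheres}. The only cosmetic difference is that you package the iteration as a formal induction on $n$ (peeling off $e_{n-1}$), whereas the paper phrases it as ``successively eliminating'' all the $e_i$ at once to arrive directly at a wedge indexed by $T\subseteq E(P_n)$; the bookkeeping you describe for the bijection $T\leftrightarrow(T',\,e_{n-1}\in T\text{ or not})$ is exactly what that successive elimination amounts to.
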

\begin{proof}
For $i \in [n-1]$, let $e_i$ be the $i^{\text{th}}$ edge in the central
path of the caterpillar graph $G_n(\vec{m})$ (see \Cref{fig:example of general caterpillar graph}). Since $\mathrm{BD}^{\vec{\lambda}}(G_n(\vec{m}))$ is a grape (\Cref{lemma:homotopy of BD of tree}) and each vertex of the central path of $G_n(\vec{m})$ is adjacent to a leaf, we have the following homotopy equivalence
$$\mathrm{BD}^{\vec{\lambda}}(G_n(\vec{m}))\simeq \big{(}\mathrm{BD}^{\vec{\lambda}}(G_n(\vec{m})), e_i\big{)} \bigvee \Sigma\big{(}\mathrm{BD}^{\vec{\lambda}}(G_n(\vec{m})): e_i\big{)} \mathrm{~for~each~} i \in [n-1].$$

Moreover, for each $i \in [n-1]$, it is easy to see that the face deletion  
$$\big{(} \mathrm{BD}^{\vec{\lambda}}(G_n(\vec{m})) , e_{i} \big{)}$$
is isomorphic to 
 $$\mathrm{BD}^{(\lambda_1,\dots,\lambda_{i})}(G_{i}(m_1,\dots,m_{i})) \ast \mathrm{BD}^{(\lambda_{i+1},\dots,\lambda_n)}(G_{n-i}(m_{i+1},\dots,m_n))$$
and the link 
$$\big{(} \mathrm{BD}^{\vec{\lambda}}(G_n(\vec{m})) : e_{i} \big{)} $$ is isomorphic to $$\mathrm{BD}^{(\lambda_1,\dots,\lambda_{i-1},\lambda_{i}-1)}(G_{i}(m_1,\dots,m_{i})) \ast \mathrm{BD}^{(\lambda_{i+1}-1,\lambda_{i+2},\dots,\lambda_n)}(G_{n-i}(m_{i+1},\dots,m_n)).$$

Combining these decompositions and isomorphisms, we can successively eliminate the edges in the central path of $G_n(\vec{m})$, until none is left. By doing so, we get the following homotopy equivalence
\begin{equation}\label{eq:computation}
\mathrm{BD}^{\vec{\lambda}}(G_n(\vec{m})) \simeq  \bigvee\limits_{T \subseteq E(P_n)}\bigg{(}\mathop{\ast}_{i=1}^{n} \Sigma^{T,i}\Big{(}\mathrm{BD}^{(\lambda_i-T_i)}(G_1(m_i))\Big{)} \bigg{)},
\end{equation}
where, for a simplicial complex $K$, $T \subseteq E(P_n)$ and $i\in [n]$, the space $\Sigma^{T,i}(K)$ is 
\begin{equation*}
\Sigma^{T,i}(K)=
\begin{cases}
\Sigma(K), & \mathrm{if~} i>1, e_{i-1} \in T, \\
K, & \mathrm{otherwise}. 
\end{cases}
\end{equation*}

Also, it is easy to see that, for $k,r \geq 1$, $\mathrm{BD}^k(G_1(r))$ is $(k-1)$-skeleton of a simplex of dimension $(r-1)$. Therefore, 
\begin{equation}\label{eq:BD of star graph}
\mathrm{BD}^k(G_1(r))\simeq 
\begin{cases}
\bigvee\limits_{\binom{r-1}{k}} \S^{k-1}, & \mathrm{if}~ k< r,\\
\{\mathrm{point}\}, & \mathrm{if}~ k\geq r.
\end{cases}
\end{equation}

Thus, using \Cref{lemma:join of spheres} and \Cref{eq:BD of star graph}, we get $$\Sigma^{T,i}\Big{(}\mathrm{BD}^{(\lambda_i-T_i)}(G_1(m_i))\Big{)} \simeq \left( \bigvee\limits_{\binom{m_i-1}{\lambda_i-T_i}} \S^{\lambda_i - T_i-1+\delta_{T,i}}\right),$$
where $\delta_{T,i} \in \{0,1\}$ is the characteristic function: 
\begin{equation*}
\delta_{T,i}=
\begin{cases}
1, & \mathrm{if~} i>1, e_{i-1} \in T, \\
0, & \mathrm{otherwise}. 
\end{cases}
\end{equation*}

Observe that, for any $T\subseteq E(P_n)$, $\sum\limits_{i=1}^{n} T_i =2 \#T$ and $\sum\limits_{i=1}^{n} \delta_{T,i} = \#T$.

Finally, distributing the join over the wedge in \Cref{eq:computation} and using \Cref{lemma:join of spheres} repeatedly, we obtain the equivalence 
$$\mathrm{BD}^{\vec{\lambda}}(G_n(\vec{m})) \simeq \bigvee\limits_{T \subseteq E(P_n)} \left(  \bigvee\limits_{\prod_i \binom{m_i-1}{\lambda_i-T_i}} \S^{|\vec{\lambda}|- \# T-1} \right).$$

This completes the proof of \Cref{lemma:exacthomotopyofcaterpillar}.
\end{proof}

\section{Acknowledgement}
The author would like to thank Priyavrat Deshpande for many helpful comments and suggestions. The author would also like to thank the anonymous referee for suggesting the concept of combinatorial grape, which helped a lot in improving this article. In particular, the author is grateful for the proof of \Cref{lemma:homotopy of BD of tree} which also lead to the strengthening of \Cref{lemma:exacthomotopyofcaterpillar}. This work is partially funded by a grant from Infosys Foundation.

\bibliographystyle{alpha}

\end{document}